\newtheorem{theorem}{Theorem}[section]
\newtheorem{lemma}[theorem]{Lemma}
\newtheorem{proposition}[theorem]{Proposition}
\newtheorem{corollary}[theorem]{Corollary}
\theoremstyle{definition}
\newtheorem{definition}[theorem]{Definition}
\DeclareMathOperator{\Aut}{Aut}
\DeclareMathOperator{\Ext}{Ext}
\DeclareMathOperator{\Char}{char}
\DeclareMathOperator{\Gal}{Gal}
\newcommand\Z{\ensuremath{\mathbf{Z}}}
\newcommand\N{\ensuremath{\mathbf{N}}}
\newcommand\Q{\ensuremath{\mathbf{Q}}}
\newcommand\F{\ensuremath{\mathbf{F}}}
\newcommand{\calA}{\mathcal{A}}
\newcommand{\frakP}{\mathfrak{P}}
\newcommand{\pibar}{{\bar{\pi}}}
\newcommand{\Alg}[1]{\overline{#1}}
\newcommand{\bad}{\pi}
\newcommand{\gsc}{J}
\newcommand{\catc}{\mathscr{C}}
\newcommand{\maxextcatc}{T_\catc}
\begin{document}
\title{\'Etale subquotients of prime torsion of abelian schemes}
\author{Hendrik Verhoek}
\begin{abstract}
Let $A$ be an abelian variety over a number field 
$K$ with good reduction outside a finite set of primes $S$.
We show that if the $\ell$-torsion subgroup schemes $A[\ell^n]$
lie in a certain category of group schemes,
then $A[\ell^n]$ does not contain any subgroup schemes that are \'etale
or are of multiplicative type.
\end{abstract}
\maketitle
\tableofcontents

\section{Introduction} \label{sec:intro}

Let $K$ be a number field with ring of integers $O_{K}$ and let $S$ be a finite set of primes in $O_K$.
Denote by $O_S$ the ring of $S$-integers of $K$.
Let $\ell$ be a rational prime such that none of the primes in $S$ divides $\ell$.

\begin{definition}
Let $\catc$ be a subcategory of the category of finite flat commutative group schemes over $O_S$
of $\ell$-power order, such that $\catc$ is closed under taking products, subquotients
and Cartier duality.
\end{definition}

In addition, with an eye towards our main theorem stated below, 
we state the following two conditions that the category $\catc$ might or might not satisfy.
These conditions involve simple group schemes in $\catc$, 
i.e., group schemes that have no non-trivial closed flat subgroup schemes.

\vskip 10pt
\noindent
{\bf Condition $(1)$:} For all simple non-\'etale group schemes $T$ in $\catc$ 
and all simple \'etale group schemes $E$ in $\catc$,
the group $\Ext_\catc^{1}(T,E)$ is trivial.

\vskip 10pt
\noindent
{\bf Condition $(2)$:}
Let $F$ be the compositum of all $K(E)$, where $E$ runs over all simple \'etale group schemes $E$ in $\catc$.
Then the extension $F/K$ is finite and the maximal abelian extension $R$ of $F$,
that is unramified outside $S$ and at most tamely
ramified at primes over $S$, is a cyclic extension.

\vskip 10pt

Let $A$ be an abelian variety over $K$ with good reduction outside $S$,
let $\calA$ denote its N\'eron model.
Denote by $\calA[\ell^n]$ the $\ell^n$-torsion subgroup scheme of $\calA$.
The schemes $\calA[\ell^n]$ are finite flat commutative group scheme over $O_S$. 
We prove:

\begin{theorem} \label{thm:abvar_torsionfilter}
Let $A$ be an abelian variety such that $\calA[\ell^n]$ is an object
in $\catc$ for all $n \in \N$.
If Conditions $(1)$ and $(2)$ hold for the category $\catc$,
then $\calA[\ell]$ does not have subquotients that are \'etale or 
of multiplicative type.
\end{theorem}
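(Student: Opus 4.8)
\section*{Proof proposal}

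The plan is to treat the multiplicative-type case by Cartier duality and the \'etale case directly, by propagating an \'etale subquotient of $\calA[\ell]$ through the tower $\calA[\ell^{n}]$ and then contradicting the Weil bounds. For the reduction: $A^{\vee}$ again has good reduction outside $S$, the N\'eron model of $A^{\vee}$ is the dual abelian scheme $\calA^{\vee}$, and $\calA^{\vee}[\ell^{n}]=\calA[\ell^{n}]^{\vee}$ lies in $\catc$ because $\catc$ is closed under Cartier duality; since Conditions $(1)$ and $(2)$ are conditions on $\catc$ only, $A^{\vee}$ satisfies all hypotheses of the theorem. A multiplicative-type subquotient of $\calA[\ell]$ dualizes to an \'etale subquotient of $\calA[\ell]^{\vee}\cong\calA^{\vee}[\ell]$, so it is enough to prove that $\calA[\ell]$ has no nonzero \'etale subquotient. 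Assume for contradiction that it does. The first step is a lemma drawn from Condition $(1)$: \emph{any $\mathcal{G}\in\catc$ that has a nonzero \'etale subquotient has a nonzero \'etale quotient.}

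I would prove this lemma by a ``bubbling'' argument. Fix a composition series of $\mathcal{G}$ in $\catc$; since a subquotient of an \'etale group scheme is \'etale, at least one composition factor is a simple \'etale object of $\catc$. Whenever a simple \'etale factor sits immediately below a simple non-\'etale factor $T$ in the series, the corresponding length-two subquotient is an extension of $T$ by an \'etale object and hence splits by Condition $(1)$ ($\Ext^{1}_{\catc}(T,E)=0$), which lets one interchange the two factors; iterating pushes every \'etale factor to the top, and the resulting quotient of $\mathcal{G}$ is an iterated extension of simple \'etale group schemes over $O_{S}$, hence \'etale (an extension of \'etale group schemes is \'etale, as one checks on fibres), and nonzero. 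With the lemma in hand I propagate. Let $\mathcal{Q}_{n}$ be the largest quotient of $\calA[\ell^{n}]$ that is \'etale over $O_{S}$ (it exists, as $\calA[\ell^{n}]$ has only finitely many closed subgroup schemes). Using the surjection $\calA[\ell^{n+1}]\xrightarrow{\cdot\ell}\calA[\ell^{n}]$, set $\mathcal{K}_{n}=\ker\!\big(\calA[\ell^{n+1}]\xrightarrow{\cdot\ell}\calA[\ell^{n}]\twoheadrightarrow\mathcal{Q}_{n}\big)$; then $\calA[\ell]\subseteq\mathcal{K}_{n}$, so $\mathcal{K}_{n}$ inherits a nonzero \'etale subquotient from $\calA[\ell]$ and hence, by the lemma, has a nonzero \'etale quotient $\mathcal{Q}'$. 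Dividing $\calA[\ell^{n+1}]$ by the kernel of $\mathcal{K}_{n}\twoheadrightarrow\mathcal{Q}'$ gives a quotient that is an extension of $\mathcal{Q}_{n}$ by $\mathcal{Q}'$, hence \'etale; therefore $|\mathcal{Q}_{n+1}|\ge|\mathcal{Q}'|\cdot|\mathcal{Q}_{n}|$ and $|\mathcal{Q}_{n}|\to\infty$. Multiplication by $\ell$ also induces compatible surjections $\mathcal{Q}_{n+1}(\Alg K)\twoheadrightarrow\mathcal{Q}_{n}(\Alg K)$.

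Now I bring in Condition $(2)$. Put $Q_{n}=\mathcal{Q}_{n}(\Alg K)$; its Jordan--H\"older factors are simple \'etale objects of $\catc$, so each $K(E)$ that occurs lies in $F$, and therefore $\Gal(\Alg K/F)$ acts on $Q_{n}$ through a finite $\ell$-group $\Gamma_{n}=\Gal(F(Q_{n})/F)$, where $F(Q_{n})=F\cdot K(Q_{n})$. Since $\Gamma_{n}$ is an $\ell$-group, the extension $F(Q_{n})/F$ is unramified outside $S$ (because $\mathcal{Q}_{n}$ is \'etale over $O_{S}$) and \emph{tamely} ramified at primes over $S$ (a $p$-group of wild inertia, $p\neq\ell$, inside an $\ell$-group is trivial). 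Hence $\Gamma_{n}^{\mathrm{ab}}$ is a quotient of $\Gal(R/F)$, which is finite by class field theory and cyclic by Condition $(2)$; by the Burnside basis theorem $\Gamma_{n}$ is then cyclic, so $\Gamma_{n}=\Gamma_{n}^{\mathrm{ab}}$ and $|\Gamma_{n}|$ is bounded independently of $n$. As $F(Q_{n})\subseteq F(Q_{n+1})$, the fields $F(Q_{n})$ stabilize to a finite extension $F_{\infty}/K$, and $\Gal(\Alg K/K)$ acts on the profinite quotient $\widehat{Q}=\varprojlim_{n}Q_{n}$ of $T_{\ell}A$ through the finite group $\Gal(F_{\infty}/K)$. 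Because $|Q_{n}|\to\infty$, $\widehat{Q}$ is infinite, so its torsion-free quotient gives a $\Gal(\Alg K/K)$-equivariant quotient $T_{\ell}A\twoheadrightarrow\Z_{\ell}^{h}$ with $h\ge 1$ and finite monodromy. Then $V_{\ell}A$ has a nonzero subquotient on which geometric Frobenius at a prime $v\notin S$, $v\nmid\ell$, unramified in $F_{\infty}$, acts with eigenvalues that are roots of unity; this contradicts the Weil bounds, which force every eigenvalue of that Frobenius on any subquotient of $V_{\ell}A$ to have absolute value $|\F_{v}|^{1/2}\neq1$. The step I expect to be the real obstacle is getting the lemma and the propagation in the correct direction: Condition $(1)$ vanishes $\Ext^{1}$ only with the \'etale object as \emph{sub}-object, which forces one to work with \'etale \emph{quotients} throughout and hence to finish with a weight argument rather than with finiteness of Mordell--Weil torsion; checking that \'etale quotients genuinely accumulate along the tower, and that $F(Q_{n})/F$ is tame at $S$ so that Condition $(2)$ applies, are the delicate points.
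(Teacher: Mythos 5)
Your proposal is correct, and its structural core coincides with the paper's: your ``bubbling'' lemma (a nonzero \'etale subquotient forces a nonzero \'etale quotient, via Condition $(1)$) is exactly Proposition \ref{prop:filter1}, and your argument that $\Gal(\Alg{K}/F)$ acts on $Q_n$ through an $\ell$-group which is tame at $S$, has cyclic abelianization by Condition $(2)$, and is therefore cyclic by Burnside, so that $F(Q_n)\subseteq R$, is precisely Lemma \ref{prop:etale_field} (via Lemmas \ref{lem:cyclicpgroup} and \ref{lem:mod_ext_gal}). Where you genuinely diverge is the endgame. The paper counts simple \'etale factors: $\calA[\ell^n]$ admits $kn$ of them, one picks a prime $q\notin S$ inert in $R/K$, and constancy of the \'etale quotient over $R$ together with $\F_\frakP=\F_q$ forces $|\calA_q(\F_q)|\geq \ell^{kn}$, contradicting finiteness of $A(\F_q)$ (Proposition \ref{prop:points}). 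You instead propagate \'etale quotients up the multiplication-by-$\ell$ tower, pass to $\varprojlim_n Q_n$, and obtain an infinite quotient of $T_\ell A$ with monodromy bounded by the finite group $\Gal(R/K)$, contradicting the Weil bounds. Both arguments work, and yours has one concrete advantage: it needs no prime inert in $R/K$, whose existence is an extra input in the paper's Proposition \ref{prop:points} (a priori it requires $\Gal(R/K)$, not merely $\Gal(R/F)$, to be cyclic). The price is invoking purity of Frobenius weights where the paper only needs finiteness of $A(\F_q)$. Two small points you should tighten: justify the existence of the maximal \'etale quotient over $O_S$ (the connected--\'etale sequence does not globalize for free; either take the scheme-theoretic image in a product of \'etale quotients, or simply use the quotient produced by Proposition \ref{prop:filter1}, which suffices for your growth estimate), and note that flatness of $\mathcal{K}_n$ follows because it is the kernel of an fppf surjection of finite flat group schemes. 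Neither is a gap in the idea.
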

As an application, we prove:

\begin{corollary}
There do not exist abelian varieties over $\Q(\sqrt{13})$
and $\Q(\sqrt{17})$ with good reduction everywhere.
\end{corollary}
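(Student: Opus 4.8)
The plan is to invoke Theorem~\ref{thm:abvar_torsionfilter} with $\ell = 2$ and $S = \emptyset$ (good reduction everywhere), for an appropriate category $\catc$, and to combine its conclusion with a classification of the simple finite flat $2$-group schemes over these two fields. Suppose, for contradiction, that $A$ is a nonzero abelian variety over $K$, with $K = \Q(\sqrt{13})$ or $K = \Q(\sqrt{17})$, having good reduction at every prime. Then its N\'eron model $\calA$ is an abelian scheme over $O_K$, so each $\calA[2^n]$ is a finite flat commutative group scheme over $O_K$ of $2$-power order (the condition that no prime of $S$ divide $\ell$ being vacuous). The first task is to exhibit a category $\catc$ of such group schemes, closed under products, subquotients and Cartier duality, containing every $\calA[2^n]$, and satisfying Conditions~$(1)$ and~$(2)$.

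Condition~$(2)$ and the classification of simple objects are the comparatively soft inputs. A simple \'etale object of $\catc$ is killed by $2$ and corresponds to an irreducible $\F_2$-representation of $\Gal(\Alg{K}/K)$ unramified at every finite place; since $\Q(\sqrt{13})$ and $\Q(\sqrt{17})$ have class number one and (by the Odlyzko and Minkowski discriminant bounds applied to the Galois closure of the splitting field, whose root discriminant is only $\sqrt{13}$ or $\sqrt{17}$) admit no nontrivial everywhere unramified extension, the sole simple \'etale object is $\Z/2\Z$, with $K(\Z/2\Z) = K$. Hence $F = K$ and the field $R$ of Condition~$(2)$ is the Hilbert class field of $K$, namely $K$, which is trivially cyclic. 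The same discriminant bounds, combined with Fontaine's estimate on the different of a finite flat $2$-torsion group scheme over the completions $O_{K,v}$ at the primes $v \mid 2$, force every simple object of $\catc$ to have trivial generic fibre as a Galois module; so, by the Oort--Tate classification of group schemes of order $2$ over $O_K$, the only simple objects are $\Z/2\Z$, $\mu_2$, and — since $2$ splits in $\Q(\sqrt{17})$ — the two further order-$2$ group schemes that are of multiplicative type at one prime over $2$ and \'etale at the other.

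Condition~$(1)$ is the main obstacle, and is where the arithmetic of the two specific fields really enters. For $\Q(\sqrt{13})$, where $2$ is inert, the only simple non-\'etale object is $\mu_2$ and one must check $\Ext^1_{O_K}(\mu_2,\Z/2\Z) = 0$: any extension of $\mu_2$ by $\Z/2\Z$ over $O_K$ splits along the connected-\'etale sequence at the unique prime over $2$, and — its geometric points forming a trivial Galois module while $K$ has no unramified extension — splits over $O_K[1/2]$ as well, so the single prime over $2$ leaves no room in the gluing. For $\Q(\sqrt{17})$ the full category of $2$-power-order group schemes does \emph{not} satisfy Condition~$(1)$, since the product of the two mixed order-$2$ schemes is a nonsplit extension of $\mu_2$ by $\Z/2\Z$; here one must take for $\catc$ a proper subcategory that still contains every $\calA[2^n]$ (which requires showing that the mixed schemes do not occur as subquotients of the $\calA[2^n]$) and verify the vanishing of the now smaller $\Ext^1_\catc(\mu_2,\Z/2\Z)$. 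This local bookkeeping at $2$, genuinely different in the inert and split cases, is the technical heart of the argument.

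Granting Conditions~$(1)$ and~$(2)$, Theorem~\ref{thm:abvar_torsionfilter} shows that $\calA[2]$ has no subquotient that is \'etale or of multiplicative type. But if $A \ne 0$, then $\calA[2]$ is a nonzero finite flat $2$-torsion group scheme over $O_K$, hence contains a closed flat subgroup scheme of minimal positive order; such a subgroup scheme is simple, so, lying in $\catc$, it is $\Z/2\Z$ or $\mu_2$ by the classification — contradicting Theorem~\ref{thm:abvar_torsionfilter}. Therefore $\calA[2] = 0$, forcing $\dim A = 0$; no nonzero abelian variety over $\Q(\sqrt{13})$ or $\Q(\sqrt{17})$ can have good reduction everywhere.
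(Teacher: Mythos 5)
Your treatment of $\Q(\sqrt{13})$ follows the paper's route: $2$ is inert, the only simple objects of $\catc$ are $\Z/2\Z$ and $\mu_2$, Condition $(2)$ is easy, and Condition $(1)$ reduces to $\Ext^1_{\catc}(\mu_2,\Z/2\Z)=0$ (the paper delegates this to Schoof's Prop.\ 2.6 rather than the local/global gluing sketch you give, but the outline is the same), after which Theorem \ref{thm:abvar_torsionfilter} kills $\calA[2]$. That half is fine.

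The gap is in the $\Q(\sqrt{17})$ case. You correctly note that $G_\pi\times G_{\pibar}$ is a nonsplit extension of $\mu_2$ by $\Z/2\Z$, so Condition $(1)$ fails for the full category, and you propose to repair this by shrinking $\catc$ to a subcategory still containing every $\calA[2^n]$, conceding that this ``requires showing that the mixed schemes do not occur as subquotients of the $\calA[2^n]$.'' That is precisely the step you do not supply, and there is no apparent way to supply it a priori: nothing prevents $G_\pi$ from being a subquotient of $\calA[2^n]$, and since $\catc$ must be closed under subquotients and Cartier duality, admitting $G_\pi$ forces $G_{\pibar}=G_\pi^*$ and hence the offending extension back into $\catc$. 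Even if the mixed schemes were excluded, you would still have to show that no \emph{other} nonsplit extension of $\mu_2$ by $\Z/2\Z$ lies in the subcategory. The paper does something genuinely different here: it accepts that Condition $(1)$ fails and Theorem \ref{thm:abvar_torsionfilter} does not apply, and instead observes that every extension of a simple non-\'etale object by a simple \'etale object is annihilated by $2$ (being a product of $G_\pi$'s and $G_{\pibar}$'s); from this one deduces that the rank of $\calA[2^n]$ would be independent of $n$, which is incompatible with $|\calA[2^n]|=2^{2gn}$ for $g>0$. Without that (or some substitute for your unproved ``no mixed subquotients'' claim), your argument does not establish the $\Q(\sqrt{17})$ half of the corollary.
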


In the rest of the article we continue as follows.
First we indicate how one finds simple group schemes in $\catc$.
Then we discuss filtrations and extensions of group schemes in $\catc$ and prove 
Theorem \ref{thm:abvar_torsionfilter}.
The proof is divided into three steps, the same steps that 
can be found in \cite{Fontaine:1985}, \cite{Schoof:2003} and \cite{Schoof:2005}
and that prove the non-existence or unique up to isogeny results of abelian varieties
with good or semi-stable reduction at the primes in $S$.

\begin{enumerate}
\item Define a category $\catc$ that contains $\calA[\ell^n]$ for all $n$

\item Find the simple objects in the category $\catc$ by 
using the generic fiber of objects in $\catc$ annihilated by $\ell$ 
and the discriminant bounds of Odlyzko to classify the generic fibers of simple objects in $\catc$,
and subsequently use theorems of Oort-Tate and Raynaud to determine the simple objects up to isomorphism.
Verify that Condition $(2)$ holds.

\item Calculate various extension groups of the objects in $\catc$ and 
verify Condition $(1)$. 
If both conditions hold, then apply Theorem \ref{thm:abvar_torsionfilter}.
\end{enumerate}

\section{The generic fiber of simple group schemes}

The generic fiber of a finite flat commutative group scheme $\gsc$ over $O_S$ is a group scheme over $K$,
which we denote by $\gsc_K$.
Since $\Char(K) = 0$, $\gsc_K$ is an \'etale group scheme.
Therefore, the group scheme $\gsc_K$ is just an abelian group $\gsc(\Alg{K})$ together with the Galois action 
$\rho_{\gsc}: G_{K} \longrightarrow \Aut(\gsc(\Alg{K}))$.
We denote by $K(\gsc)$ the field extension obtained by adjoining the $\Alg{K}$-points of $\gsc$ to $K$.
The representation $\rho_\gsc$ factors through a finite Galois extension $K(\gsc)/K$.
By considering the generic fiber $\gsc_K$ we obtain not only information about the group scheme $\gsc$ considered over $K$,
but also as a scheme over $O_S$. 
It is even true that, under certain conditions (see \cite{Raynaud:1974}), 
the generic fiber uniquely determines the group scheme $\gsc$ over $O_S$.

A first step to understand the category $\catc$ is to classify its simple objects
up to isomorphism.
Every simple object is annihilated by $\ell$:
if not, the Zariski closure of the $\ell$-torsion points in the generic fiber would form a non-trivial
closed flat subgroup scheme.
Since by assumption $\catc$ is closed under taking subquotients,
this subgroup scheme would again be in $\catc$.

Define $\maxextcatc$ to be the compositum of 
all fields $K(\gsc)$, where the $\gsc$ are group schemes in $\catc$ that are annihilated by $\ell$.
We call $\maxextcatc$ the \emph{maximal $\ell$-torsion extension} of $\catc$.
This extension $\maxextcatc$ need not be finite in general.
The reason that we are interested in the maximal $\ell$-torsion extension of $\catc$
is that if $\maxextcatc$ is finite, 
it enables us to find the simple objects in $\catc$.
Namely, the $\Alg{K}$-points of every simple object generate an extension
that is a subfield of the maximal $\ell$-torsion extension of $\catc$.
As a side note we mention that to find $\maxextcatc$ in practice,
it is helpful that the category $\catc$ is closed under taking products.

\begin{lemma} \label{lem:irrgenfiber}
If $\gsc$ is a simple finite flat commutative group scheme over $O_S$, then 
the representation $\rho_{\gsc} : G_{K} \rightarrow \Aut(\gsc(\Alg{K}))$
is irreducible.
\end{lemma}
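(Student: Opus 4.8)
The plan is to argue by contradiction, converting a $G_K$-stable submodule of $\gsc(\Alg K)$ into a closed flat subgroup scheme of $\gsc$ over $O_S$ by taking a scheme-theoretic closure, and then invoking the simplicity of $\gsc$. So suppose $\rho_\gsc$ is reducible and choose a $G_K$-stable subgroup $W$ with $0 \subsetneq W \subsetneq \gsc(\Alg K)$. As noted above, $\Char(K)=0$ forces $\gsc_K$ to be étale, so passing to $\Alg K$-points is an equivalence between closed subgroup schemes of $\gsc_K$ over $K$ and $G_K$-stable subgroups of $\gsc(\Alg K)$; let $\gsc'_K \subseteq \gsc_K$ be the closed subgroup scheme corresponding to $W$. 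I would then let $\gsc'$ be the scheme-theoretic closure of $\gsc'_K$ in $\gsc$, i.e.\ the smallest closed subscheme of $\gsc$ whose generic fiber is $\gsc'_K$. Since $O_S$ is a Dedekind domain, $\gsc'$ has no $O_S$-torsion and is therefore flat over $O_S$, and by construction its generic fiber is $\gsc'_K$.

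Next I would verify that $\gsc'$ is a subgroup scheme of $\gsc$. The key observation is that $\gsc' \times_{O_S} \gsc'$ is again flat over $O_S$, hence coincides with the scheme-theoretic closure of its generic fiber $\gsc'_K \times_K \gsc'_K$ inside $\gsc \times_{O_S} \gsc$. Because the multiplication, inversion and unit morphisms of $\gsc$ carry $\gsc'_K \times_K \gsc'_K$, $\gsc'_K$ and $\operatorname{Spec} O_S$ into $\gsc'_K \subseteq \gsc'$, the universal property of the scheme-theoretic closure forces these morphisms to restrict to $\gsc'$; thus $\gsc'$ is a closed flat subgroup scheme of $\gsc$ over $O_S$. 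On generic fibers, $\gsc'$ has $\Alg K$-points $W$, which is neither $0$ nor all of $\gsc(\Alg K)$, so $\gsc'$ is neither the trivial subgroup scheme nor $\gsc$ itself. This contradicts the assumption that $\gsc$ is simple, and hence $\rho_\gsc$ is irreducible.

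The only delicate point is the passage to the scheme-theoretic closure: one needs to know that over the Dedekind base $O_S$ the closure of a closed subgroup scheme of the generic fiber is flat and is itself a subgroup scheme. This is standard material on models of group schemes over Dedekind rings (see \cite{Raynaud:1974}), and I do not expect any genuine obstacle beyond this bookkeeping.
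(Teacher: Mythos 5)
Your proof is correct and follows essentially the same route as the paper: both arguments take the Zariski (scheme-theoretic) closure over the Dedekind base $O_S$ of the closed subgroup scheme of the generic fiber corresponding to the $G_K$-stable submodule, observe that this closure is a non-trivial closed flat subgroup scheme, and derive a contradiction with simplicity. You merely make explicit the flatness and subgroup-scheme verifications that the paper's proof leaves implicit.
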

\begin{proof}
Suppose $\rho_{\gsc}$ admits a non-trivial $G_{K}$-stable subgroup $V$.
Since the closure of the generic point of $O_S$ is $O_S$ (recall that $O_S$ is a Dedekind ring),
taking the Zariski closure of $V$ gives a non-trivial closed flat subgroup scheme of $\gsc$.
This closure is equal to $\gsc$ because $\gsc$ is simple.
The generic fiber of the closure, which is equal to $J(\Alg{K})$, is contained in $V$.
\end{proof}

The generic fiber of a simple object $\gsc$ in $\catc$ is a
simple $\F_\ell[\Gal(K(\gsc)/K)]$-module.
Since simple objects are killed by $\ell$,
such a generic fiber is also a simple $\F_{\ell}[\Gal(\maxextcatc/K)]$-module.
Therefore we classify all simple $\F_{\ell}[\Gal(\maxextcatc/K)]$-modules.
If we can find a relatively large normal $\ell$-subgroup $H$ in $\Gal(\maxextcatc/K)$, 
it is easier to classify irreducible submodules:
the representation $\rho_{\gsc}$ factors not only through $\Gal(L/K)$,
but also through the quotient of $\Gal(L/K)$ by $H$.
This is an immediate consequence of:

\begin{lemma} \label{lem:factor_through_p_group}
Let $\gsc$ be a simple object in $\catc$.
Then $\Gal(K(\gsc)/K)$ contains no non-trivial normal $\ell$-subgroup.
\end{lemma}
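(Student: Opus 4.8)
The plan is to argue by contradiction: suppose $H \trianglelefteq \Gal(K(\gsc)/K)$ is a non-trivial normal $\ell$-subgroup, and derive a contradiction with the simplicity of $\gsc$. The key observation is that $\gsc(\Alg{K})$ is an $\F_\ell$-vector space on which $H$ acts, so I want to exploit the classical fact that a $p$-group acting on a nonzero $\F_p$-vector space has a nonzero fixed subspace. Concretely, set $V = \gsc(\Alg{K}) \neq 0$ and $V^H = \{v \in V : hv = v \text{ for all } h \in H\}$. Since $H$ is an $\ell$-group and $V$ is a nonzero $\F_\ell$-vector space, a standard orbit-counting argument (partition $V \setminus \{0\}$ into $H$-orbits, each of size a power of $\ell$, and count mod $\ell$) shows $V^H \neq 0$. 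So $V^H$ is a proper or full nonzero subspace of $V$.

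Next I would check that $V^H$ is stable under the full group $G = \Gal(K(\gsc)/K)$. This is where normality of $H$ enters: for $g \in G$, $v \in V^H$, and $h \in H$, write $h g v = g (g^{-1} h g) v = g v$ since $g^{-1} h g \in H$; hence $gv \in V^H$. So $V^H$ is a $G_K$-stable subgroup of $\gsc(\Alg{K})$. By Lemma \ref{lem:irrgenfiber}, the representation $\rho_\gsc$ is irreducible, so the only $G_K$-stable subgroups are $0$ and all of $V$; since $V^H \neq 0$ we must have $V^H = V$, i.e.\ $H$ acts trivially on $\gsc(\Alg{K})$.

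Finally, I would invoke the fact that $\rho_\gsc : G_K \to \Aut(\gsc(\Alg{K}))$ factors through $\Gal(K(\gsc)/K)$ \emph{faithfully} by the very definition of the field $K(\gsc)$ (it is the fixed field of the kernel of $\rho_\gsc$). Since $H$ lies in the image $\Gal(K(\gsc)/K)$ and acts trivially on $\gsc(\Alg{K})$, faithfulness forces $H = 1$, contradicting the assumption that $H$ is non-trivial. This completes the proof.

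I do not anticipate a serious obstacle here; the argument is essentially the standard ``$p$-groups have nonzero fixed points'' lemma combined with the irreducibility already established in Lemma \ref{lem:irrgenfiber}. The only point requiring a little care is making sure the action of $\Gal(K(\gsc)/K)$ on $\gsc(\Alg{K})$ is genuinely faithful, which is immediate from how $K(\gsc)$ was defined (adjoining all $\Alg{K}$-points of $\gsc$), so that the normal $\ell$-subgroup $H$ of the Galois group really does act nontrivially unless it is trivial.
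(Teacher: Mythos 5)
Your argument is correct and follows essentially the same route as the paper: both rest on the fixed-point theorem for $\ell$-groups acting on nonzero $\ell$-groups, the observation that normality makes the fixed subspace Galois-stable, and the faithfulness of $\Gal(K(\gsc)/K)$ on $\gsc(\Alg{K})$. The only cosmetic difference is that you invoke the already-proved irreducibility of $\rho_\gsc$ (Lemma \ref{lem:irrgenfiber}) to conclude $V^H = V$, whereas the paper re-runs the Zariski-closure-plus-simplicity argument directly on the fixed points; these are the same idea.
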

\begin{proof}
The representation $\rho_{\gsc}$ factors through $\Gal(K(\gsc)/K)$.
Let $H$ be a non-trivial normal $\ell$-subgroup of $\Gal(K(\Alg{\gsc})/K)$.
Then $H$ must act faithfully as a $\ell$-group on the $\ell$-group $\gsc(\Alg{K})$,
but this is impossible. There are non-trivial fixed points of $\gsc(\Alg{K})$ under this action and they form a closed flat subgroup scheme
of $\gsc$, which must equal $\gsc$ since $\gsc$ is simple. 
\end{proof}

Finally, once simple $\F_{\ell}[\Gal(\maxextcatc/K)]$-modules have been found, the question
remains if they extend to finite flat commutative group schemes over $O_S$.
This is addressed in the work of Raynaud \cite{Raynaud:1974} and Oort-Tate \cite{TateOort:1970}.

\section{Filtrations by simple group schemes} \label{sec:filtrations}

In this section we discuss filtrations of group schemes in $\catc$ by simple subgroup schemes.
These filtrations will be used to prove Theorem \ref{thm:abvar_torsionfilter}.
Each finite flat commutative group scheme $\gsc$ contains a simple closed flat subgroup scheme $\gsc'$.
The same is true for $\gsc/\gsc'$.
Continuing like this we obtain a filtration of $\gsc$:

\begin{definition}
A \emph{(left) filtration of a finite flat commutative group scheme $\gsc$}
is an ordered set $\{ \gsc_i \}_{i=1}^n$ such that
\begin{itemize}
\item $\gsc_1$ is a simple closed flat subgroup scheme of $F_1 := \gsc$

\item for $1 < i < n$, let $\gsc_i$ be a simple closed flat subgroup scheme of 
$F_i := F_{i-1}/\gsc_{i-1}$

\item $\gsc_n$ is simple
\end{itemize}
We call $n$ the length of the filtration.
\end{definition}

We note that by using Cartier duality, we can get another (right) filtration.
If $A$ is a simple group scheme occurring in a filtration (or equivalently all filtrations) of $\gsc$,
we say that \emph{$\gsc$ admits $A$}.

\begin{lemma} \label{lem:filter_force_subgroupscheme}
Let $\gsc$ be a group scheme in $\catc$ that admits the simple group scheme $A$.
Suppose that for each simple $B$ with $B \not \simeq A$ occurring in the filtration of $\gsc$,
the group $\Ext_\catc^{1}(A,B)$ is trivial.
Then $A$ is a closed flat subgroup scheme of $\gsc$.
\end{lemma}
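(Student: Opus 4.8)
The plan is to argue by induction on the length $n$ of a filtration $\{\gsc_i\}_{i=1}^n$ of $\gsc$ in which $A$ occurs. If $n=1$, then $\gsc$ is itself simple and admits $A$, so $\gsc \simeq A$ and there is nothing to prove. For the inductive step, consider the first step of the filtration: $\gsc_1$ is a simple closed flat subgroup scheme of $\gsc$, and $F_2 := \gsc/\gsc_1$ carries the induced filtration $\{\gsc_i\}_{i=2}^n$, which is shorter and in which $A$ still occurs as a simple subquotient (since $A$ occurs in every filtration of $\gsc$, and deleting $\gsc_1$ from the list gives a filtration of $F_2$). The hypothesis on vanishing of $\Ext^1_\catc(A,B)$ for the simple $B \not\simeq A$ occurring in the filtration of $\gsc$ applies verbatim to the subquotients of $F_2$, so by induction $A$ is a closed flat subgroup scheme of $F_2$.

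Next I would split into two cases according to whether $\gsc_1 \simeq A$ or not. If $\gsc_1 \simeq A$, we are already done: $A \simeq \gsc_1$ is by construction a closed flat subgroup scheme of $\gsc$. So assume $\gsc_1 \not\simeq A$; then $\gsc_1$ is one of the simple group schemes $B$ occurring in the filtration of $\gsc$ with $B \not\simeq A$, so $\Ext^1_\catc(A, \gsc_1)$ is trivial. Let $H \subseteq F_2$ be the image of $A$ under the inclusion $A \hookrightarrow F_2$ just produced, and let $\gsc' \subseteq \gsc$ be the preimage of $H$ under the quotient map $q\colon \gsc \to F_2 = \gsc/\gsc_1$. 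Then $\gsc'$ is a closed flat subgroup scheme of $\gsc$ (preimages of closed flat subgroup schemes under $q$ are closed flat, since $q$ is faithfully flat and finite), it lies in $\catc$ because $\catc$ is closed under subquotients, and it sits in a short exact sequence
\begin{equation*}
0 \longrightarrow \gsc_1 \longrightarrow \gsc' \longrightarrow A \longrightarrow 0.
\end{equation*}
This sequence represents a class in $\Ext^1_\catc(A,\gsc_1)$, which is trivial, so the sequence splits: $\gsc' \simeq \gsc_1 \times A$, and in particular $A$ embeds as a closed flat subgroup scheme of $\gsc' \subseteq \gsc$, as desired.

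The step I expect to require the most care is the claim that the short exact sequence $0 \to \gsc_1 \to \gsc' \to A \to 0$ genuinely lives in the category $\catc$ and that the extension group $\Ext^1_\catc$ is the right receptacle for its class — i.e., that a splitting in $\catc$ (equivalently, a $\catc$-morphism $A \to \gsc'$ splitting the surjection) yields an honest closed flat subgroup scheme isomorphic to $A$. This is where one must be careful that $\catc$, although only assumed closed under products, subquotients and Cartier duality, still has enough structure for $\Ext^1_\catc$ to classify extensions by short exact sequences of flat group schemes; one checks that the preimage $\gsc'$ of a subobject is again an object of $\catc$ (as a subobject of $\gsc$), that $\gsc_1$ is a kernel and $A$ a cokernel in the appropriate sense, and that splittings of the sequence correspond to the sought subgroup scheme. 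Granting this standard bookkeeping, the induction closes.
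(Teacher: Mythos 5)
Your argument is correct and is essentially the paper's own proof: the same induction on filtration length, the same case split on whether the first simple subgroup $\gsc_1$ is isomorphic to $A$, and the same key step of pulling back $A \subseteq F_2$ along $\gsc \to F_2$ (your preimage $\gsc'$ is exactly the fibre product $\gsc \times_{F_2} A$ used in the paper) and splitting the resulting extension of $A$ by $\gsc_1$ via the vanishing of $\Ext^1_\catc(A,\gsc_1)$. The only cosmetic point is that the claim ``$A$ occurs in the filtration of $F_2$'' should be made after disposing of the case $\gsc_1 \simeq A$, since it can fail when $A$ occurs only once; as you do handle that case separately, this is a matter of ordering, not of substance.
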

\begin{proof}
Consider the short exact sequence
\begin{align} \label{eqn:lem_filter_force}
0 \longrightarrow \gsc' \longrightarrow \gsc \longrightarrow \gsc/\gsc' = F_2 \longrightarrow 0 ,
\end{align}
where $\gsc'$ is simple.
If $\gsc' \simeq A$ there is nothing to prove, so assume $A \not \simeq \gsc'$.
We proceed by induction on the length of the filtration of $\gsc$.
The statement of the lemma holds for length one and two.
By induction we have the following exact sequence:
$$
0 \longrightarrow A \longrightarrow \gsc/\gsc' = F_2 \longrightarrow F_3 \longrightarrow 0 .
$$
The pull-back of $A$ by $\gsc$ over $F_2$, using (\ref{eqn:lem_filter_force}), gives the short exact sequence
$$
0 \longrightarrow \gsc' \longrightarrow \gsc \times_{F_2}  A \longrightarrow A \longrightarrow 0 .
$$
The group scheme $\gsc \times_{F_2} A$ is a closed flat subgroup scheme of $\gsc$.
By hypothesis, $\gsc \times_{F_2}  A \simeq A \times \gsc'$.
Hence $A$ is a closed flat subgroup scheme of $\gsc$.
\end{proof}

\begin{corollary} \label{cor:filter1}
Let $\gsc$ be a finite flat commutative group scheme in the category $\catc$ that
admits a simple group scheme $A$.
If for each simple $B$ with $B \not \simeq A$ occurring in the filtration of $\gsc$,
the group $\Ext_\catc^{1}(A,B)$ is trivial,
then there exists a closed flat subgroup scheme $\gsc'$ of $\gsc$ 
admitting only copies of $A$ and such that $\gsc/\gsc'$ 
does not admit $A$.
\end{corollary}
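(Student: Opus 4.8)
The plan is to iterate Lemma \ref{lem:filter_force_subgroupscheme} and peel off copies of $A$ one at a time, tracking what happens to the filtration at each stage. The key observation is that the hypothesis of Corollary \ref{cor:filter1} is \emph{inherited} by the relevant subquotients: if $\Ext^1_\catc(A,B)$ is trivial for every simple $B \not\simeq A$ in a filtration of $\gsc$, then the same vanishing holds for every simple $B \not\simeq A$ occurring in a filtration of any subquotient of $\gsc$, since the simple constituents of a subquotient form a sub-multiset of the simple constituents of $\gsc$ (by Jordan--Hölder for the category of finite flat commutative group schemes, or more elementarily by refining filtrations). So the hypothesis propagates as we pass to quotients.

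First I would set up the induction on the number $m$ of copies of $A$ appearing in a filtration of $\gsc$. If $m = 0$ then $\gsc$ does not admit $A$ and we take $\gsc' = 0$; if $\gsc$ admits only copies of $A$, we take $\gsc' = \gsc$. Otherwise, apply Lemma \ref{lem:filter_force_subgroupscheme}: since $\Ext^1_\catc(A,B)$ is trivial for each simple $B \not\simeq A$ in the filtration, $A$ embeds as a closed flat subgroup scheme of $\gsc$, giving a short exact sequence
\begin{align*}
0 \longrightarrow A \longrightarrow \gsc \longrightarrow \gsc/A \longrightarrow 0 .
\end{align*}
The quotient $\gsc/A$ lies in $\catc$ (closure under subquotients), admits $m-1$ copies of $A$ in its filtration, and still satisfies the $\Ext$-vanishing hypothesis by the inheritance remark above. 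By the inductive hypothesis there is a closed flat subgroup scheme $\overline{\gsc}'$ of $\gsc/A$ admitting only copies of $A$ such that $(\gsc/A)/\overline{\gsc}'$ does not admit $A$. Let $\gsc'$ be the preimage of $\overline{\gsc}'$ under $\gsc \twoheadrightarrow \gsc/A$; it is a closed flat subgroup scheme of $\gsc$ sitting in
\begin{align*}
0 \longrightarrow A \longrightarrow \gsc' \longrightarrow \overline{\gsc}' \longrightarrow 0 ,
\end{align*}
so every simple constituent of $\gsc'$ is either $A$ or a constituent of $\overline{\gsc}'$, hence a copy of $A$; thus $\gsc'$ admits only copies of $A$. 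Finally $\gsc/\gsc' \simeq (\gsc/A)/\overline{\gsc}'$ does not admit $A$, which completes the induction.

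The main obstacle is making precise and justifying the inheritance claim — that ``simple $B \not\simeq A$ occurring in the filtration'' behaves well under passing to $\gsc/A$ and under extensions. This requires knowing that the multiset of simple constituents of a finite flat commutative group scheme is well-defined independently of the chosen filtration (a Jordan--Hölder statement in this not-necessarily-abelian but sufficiently well-behaved setting), and that for a short exact sequence $0 \to \gsc' \to \gsc \to \gsc'' \to 0$ this multiset for $\gsc$ is the union of those for $\gsc'$ and $\gsc''$. Both are standard once one refines filtrations and uses that subquotients of simple objects are trivial or the whole object, but they are the load-bearing facts here; everything else is a routine diagram chase with pullbacks as in the proof of Lemma \ref{lem:filter_force_subgroupscheme}.
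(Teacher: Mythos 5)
Your proof is correct and follows essentially the same route as the paper: embed $A$ via Lemma \ref{lem:filter_force_subgroupscheme}, induct on the quotient $\gsc/A$, and take the preimage (the paper writes it as the pullback $\gsc \times_{\gsc/A} \gsc''$). The only differences are cosmetic — you induct on the number of copies of $A$ rather than on the length of the filtration, and you make explicit the Jordan--H\"older-type inheritance of the hypothesis, which the paper leaves implicit (and which its phrase ``occurring in a filtration, or equivalently all filtrations'' already presupposes).
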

\begin{proof}
We proceed by induction on the length of the filtration of $\gsc$.
If the length of $\gsc$ is one, we are done.
If the length is two, we are again done by hypothesis.
Suppose the length of $\gsc$ is $k$ and the statement holds if the length is at most $k-1$.
By Lemma \ref{lem:filter_force_subgroupscheme} we can write
$0 \subset A \subset \gsc$.
By induction, there exists a closed flat subgroup scheme $\gsc''$ of $\gsc/A$ 
such that $(\gsc/A)/\gsc''$ does not admit $A$ and $\gsc''$ only admits copies of $A$.
Then $\gsc' := \gsc \times_{\gsc/A} \gsc''$ verifies the condition of the statement.
\end{proof}

The next proposition 
resembles the fact that for finite flat commutative group schemes over a local henselian ring,
the quotient by the connected component is an \'etale group scheme.
See for instance \cite[p. 138]{CornellSilvermanStevens:1997}.

\begin{proposition} \label{prop:filter1}
If Condition $(1)$ holds for the category $\catc$,
then for any $\gsc$ in $\catc$ we have an exact sequence
$$
0 \longrightarrow \gsc' \longrightarrow \gsc \longrightarrow \gsc'' \longrightarrow 0 
$$
such that $\gsc''$ is \'etale and $\gsc'$ does not admit an \'etale scheme.
\end{proposition}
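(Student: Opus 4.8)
The plan is to distill from Condition~$(1)$ a single splitting statement and then run an induction on the length of a filtration of $\gsc$. Call an object $Y$ of $\catc$ \emph{\'etale-free} if it does not admit any simple \'etale group scheme (equivalently, no simple \'etale group scheme occurs in a filtration of $Y$). The crucial preliminary is a \emph{key lemma}: if $0 \to E \to X \to Y \to 0$ is a short exact sequence in $\catc$ with $E$ \'etale and $Y$ \'etale-free, then it splits, i.e.\ $\Ext_\catc^{1}(Y,E)=0$. Granting this, I prove the proposition by induction on the length $n$ of a filtration of $\gsc$. For $n=1$ the group scheme $\gsc$ is simple, and one sets $(\gsc',\gsc'')=(0,\gsc)$ if $\gsc$ is \'etale and $(\gsc',\gsc'')=(\gsc,0)$ otherwise. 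For $n>1$, fix a filtration of $\gsc$ and let $\gsc_1$ be its initial (simple) term, so $\gsc/\gsc_1$ inherits a filtration of length $n-1$; applying the inductive hypothesis to $\gsc/\gsc_1$ gives a short exact sequence $0\to H'\to\gsc/\gsc_1\to H''\to0$ with $H''$ \'etale and $H'$ \'etale-free. Let $\tilde\gsc'\subseteq\gsc$ be the preimage of $H'$, so that $0\to\gsc_1\to\tilde\gsc'\to H'\to0$ is exact and $\gsc/\tilde\gsc'\simeq H''$. If $\gsc_1$ is non-\'etale, then $\tilde\gsc'$ is \'etale-free (its filtration consists of $\gsc_1$ together with the simple constituents of $H'$, none \'etale), and $(\gsc',\gsc''):=(\tilde\gsc',H'')$ works. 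If $\gsc_1$ is \'etale, the key lemma splits $0\to\gsc_1\to\tilde\gsc'\to H'\to0$, producing a closed flat subgroup scheme $\gsc'\subseteq\tilde\gsc'$ mapping isomorphically onto $H'$; then $\gsc'$ is \'etale-free, while $\gsc'':=\gsc/\gsc'$ is an extension of $\gsc/\tilde\gsc'\simeq H''$ by $\tilde\gsc'/\gsc'\simeq\gsc_1$, both \'etale, hence $\gsc''$ is \'etale because an extension of finite \'etale group schemes over $O_S$ is finite \'etale.

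It remains to prove the key lemma; this is the only place Condition~$(1)$ enters, and I expect it to be the main obstacle. The argument is a d\'evissage, with the outer induction on the length of $Y$. If $Y$ is simple it is non-\'etale, and I run an inner induction on the length of $E$: when $E$ is simple the claim is precisely Condition~$(1)$; when $E$ is not simple, I choose a simple \'etale $E_1\subseteq E$, split $0\to E/E_1\to X/E_1\to Y\to0$ by the inner hypothesis to lift $Y$ into $X/E_1$, let $\tilde X\subseteq X$ be the preimage of that lift, split $0\to E_1\to\tilde X\to Y\to0$ by Condition~$(1)$, and note that the resulting copy of $Y$ inside $\tilde X\subseteq X$ maps isomorphically onto the quotient $X/E$, so $X\simeq E\times Y$. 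If $Y$ has length greater than one, I choose a simple (hence non-\'etale) $Y_1\subseteq Y$, split the pull-back $0\to E\to X\times_Y Y_1\to Y_1\to0$ by the case just treated to lift $Y_1$ into $X$, and then split $0\to E\to X/Y_1\to Y/Y_1\to0$ by the outer hypothesis, using that $Y/Y_1$ is \'etale-free of smaller length; the preimage $W\subseteq X$ of the resulting lift of $Y/Y_1$ then satisfies $Y_1\subseteq W$ and $W/Y_1\simeq Y/Y_1$, and comparing with $0\to Y_1\to Y\to Y/Y_1\to0$ shows that $W\to Y$ is an isomorphism, which lifts the quotient $Y$ to a subgroup scheme of $X$ and hence splits the original sequence. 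Here I use repeatedly that $\catc$ is closed under subquotients (so all the auxiliary group schemes lie in $\catc$), that subquotients of finite \'etale group schemes over $O_S$ are finite \'etale, and the standard facts over the Dedekind ring $O_S$ that a short exact sequence of finite flat group schemes splits once its quotient lifts to a closed subgroup scheme of the middle term, and that a homomorphism of finite flat group schemes of equal order which is injective on a subgroup scheme and induces an isomorphism on the corresponding quotient is an isomorphism.

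The genuine difficulty is confined to the d\'evissage: at each step one must verify that the lift of $Y$ (respectively of $Y_1$, of $Y/Y_1$) produced by an auxiliary splitting is compatible with the map down to the \emph{original} quotient $Y$, so that these lifts combine into an honest section of $X\to Y$ rather than merely trivializing side extensions; the pull-back and quotient diagram chases needed for this are elementary but must be carried out with the same care as in the proof of Lemma~\ref{lem:filter_force_subgroupscheme}, and everything else is formal bookkeeping. For completeness I mention an alternative route: using Condition~$(1)$ one can \emph{swap} any adjacent pair of simple quotients in a filtration of $\gsc$ in which an \'etale quotient is immediately followed by a non-\'etale one --- the length-two subquotient sitting between them is an extension of a simple non-\'etale group scheme by a simple \'etale one, hence splits, and the two quotients may be interchanged --- and iterate this move; it terminates because each swap strictly decreases the number of such inversions, and truncating the resulting filtration just after its last non-\'etale quotient yields the desired $\gsc'$. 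I would write up whichever of the two versions turns out to be shorter.
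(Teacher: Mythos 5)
Your argument is correct, but it follows a genuinely different route from the paper's. The paper passes to the Cartier dual and builds the complement from the bottom up: it repeatedly invokes Lemma~\ref{lem:filter_force_subgroupscheme} to extract a simple subgroup scheme of multiplicative type from $\gsc^*$ (and from successive quotients), enlarges the multiplicative subobject by pull-back, and uses that an extension of group schemes of multiplicative type is again of multiplicative type; dualizing back gives the \'etale quotient. You instead stay on the original side, isolate the key vanishing $\Ext_\catc^{1}(Y,E)=0$ for $E$ \'etale and $Y$ \'etale-free --- proved by a two-variable d\'evissage whose only input is Condition~$(1)$ --- and then peel off one simple subobject of $\gsc$ at a time; your diagram chases (pulling back sections, checking that $W\to Y$ is an isomorphism by order count after verifying trivial kernel) all go through over $O_S$. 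A small point in favour of your formulation: applying Lemma~\ref{lem:filter_force_subgroupscheme} on the dual side, as the paper does, nominally requires $\Ext_\catc^1(A,B)=0$ also when $A$ and $B$ are two \emph{non-isomorphic} simple objects of multiplicative type, a case that the dual of Condition~$(1)$ does not cover and that must be handled by treating all multiplicative simples as a block --- which is exactly what your key lemma does for the \'etale simples, so your version invokes Condition~$(1)$ only in the form in which it is stated. Your ``swapping'' variant at the end (using Condition~$(1)$ to commute an \'etale simple quotient past a non-\'etale one in a filtration, terminating by counting inversions) is a third correct packaging of the same d\'evissage and is likely the shortest to write up; any of the three is acceptable.
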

\begin{proof}
Let $\gsc^*$ be the Cartier dual of $\gsc$.
It suffices to show that $\gsc^*$ contains a subgroup scheme $M$ of multiplicative type 
such that $\gsc^*/M$ does not admit a simple group scheme of multiplicative type.
We may suppose that $\gsc^*$ admits a simple group scheme of multiplicative type;
if not, we are done.
Then by Lemma \ref{lem:filter_force_subgroupscheme}, the group scheme $\gsc^*$
has a simple subgroup scheme of multiplicative type.

Next, suppose that $\gsc^*$ has a subgroup of multiplicative type $M'$ such
that $\gsc^*/M'$ admits a simple group scheme of multiplicative type;
if not, we are done again.
Then again by Lemma \ref{lem:filter_force_subgroupscheme}, the group scheme
$\gsc^*/M'$ has a simple subgroup scheme of multiplicative type $M''$.
Now $M'' \times_{\gsc^*/M'} \gsc^*$ is a closed flat subgroup
scheme of $\gsc^*$ and sits inside the short exact sequence
$$
0 \longrightarrow M' \longrightarrow  \gsc^* \times_{\gsc^*/M'} M'' \longrightarrow M'' \longrightarrow 0 .
$$  
Hence $M'' \times_{\gsc^*/M'} \gsc^*$ is an extension of two group schemes of multiplicative type
and therefore itself of multiplicative type.
Proceeding this way, we find a subgroup scheme of multiplicative type
$M$ such that $\gsc^*/M$ does not admit 
a simple group scheme of multiplicative type.
\end{proof}

\section{Application to abelian varieties} \label{sec:filt_ab_var}

In this section, we will prove Theorem \ref{thm:abvar_torsionfilter}.
We first state two auxiliary lemmas:

\begin{lemma} \label{lem:cyclicpgroup}
Let $p$ be a prime and $G$ be a finite
$p$-group such that $G/[G,G]$ is cyclic.
Then $G$ is cyclic.
\end{lemma}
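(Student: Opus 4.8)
The plan is to reduce the statement to a property of the Frattini subgroup $\Phi(G)$, defined as the intersection of all maximal subgroups of $G$. First I would recall the two standard facts about $p$-groups that do the work: for a finite $p$-group $G$ one has $\Phi(G) = G^p[G,G]$, so that $G/\Phi(G)$ is an elementary abelian $p$-group; and $\Phi(G)$ consists of \emph{non-generators}, meaning that if a subset $X$ together with $\Phi(G)$ generates $G$, then $X$ already generates $G$. These are the Burnside basis theorem; I would cite them rather than reprove them.

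With these in hand the argument is short. Since $[G,G] \subseteq \Phi(G)$, the quotient $G/\Phi(G)$ is a quotient of $G/[G,G]$ and hence cyclic; being also elementary abelian, it has order $1$ or $p$. If the order is $1$, then $G = \Phi(G)$; but a nontrivial finite group has a proper maximal subgroup, so $\Phi(G) \subsetneq G$, forcing $G = 1$, which is cyclic. If the order is $p$, choose $g \in G$ whose image generates $G/\Phi(G)$; then $\langle g\rangle$ together with $\Phi(G)$ generates $G$, so by the non-generator property $G = \langle g\rangle$ is cyclic.

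An alternative, self-contained route avoids $\Phi(G)$ and runs by induction on $|G|$: if $G \neq 1$ and $[G,G] \neq 1$, pick a subgroup $N \leq Z(G) \cap [G,G]$ of order $p$ (possible because the nontrivial normal subgroup $[G,G]$ of the nilpotent group $G$ meets $Z(G)$ nontrivially); since $(G/N)/[G/N,G/N] \cong G/[G,G]$ is cyclic, $G/N$ is cyclic by induction, and writing $G = \langle g\rangle N$ with $N$ central shows $G$ is abelian, contradicting $[G,G]\neq 1$. Hence $[G,G] = 1$ and $G = G/[G,G]$ is cyclic.

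There is no real obstacle: the content is entirely classical. The only point requiring care is invoking the right input — the description $\Phi(G) = G^p[G,G]$ and the non-generator property in the first proof, or the fact that a nontrivial normal subgroup of a finite $p$-group has nontrivial intersection with the center in the second.
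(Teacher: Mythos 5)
Your first argument is exactly the paper's proof: pass to $G/\Phi(G)$ via $\Phi(G)=G^p[G,G]$ and invoke the Burnside basis theorem, with the added (harmless) care of treating the case $G=1$ separately. The alternative induction via the center is a fine classical substitute, but the main route matches the paper, so there is nothing to add.
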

\begin{proof}
The Frattini subgroup $\text{Frat}(G)$ of $G$ is equal to $[G,G] G^{p}$.
The group $G/\text{Frat}(G)$ is by hypothesis a cyclic group of order $p$.
Burnside's basis Theorem \cite[Theorem 12.2.1, p. 176]{Hall:1959} implies that $G$ is cyclic.
\end{proof}

\begin{lemma} \label{lem:mod_ext_gal}
Let $G$ be a group and $A,B,C$ be finite $G$-modules
such that $A$ and $C$ have trivial $G$-action and $G$ acts faithfully on $B$.
Let
$$
0 \longrightarrow A \longrightarrow B \longrightarrow C \longrightarrow 0
$$
be an exact sequence of $G$-modules.
Let $k$ denote the number of generators of $C$.
Then $\# G$ divides $(\# A)^{k}$.
\end{lemma}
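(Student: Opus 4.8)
The plan is to exploit the faithfulness of the $G$-action on $B$ together with the fact that the sub- and quotient modules $A$ and $C$ are $G$-trivial, so that each $g \in G$ acts on $B$ by a ``unipotent-type'' shift that is completely recorded by where it sends a set of generators of $C$. First I would lift a generating set $c_1, \dots, c_k$ of $C$ to elements $b_1, \dots, b_k \in B$. Since $g$ fixes the image $c_i$ of $b_i$ in $C$, the difference $g b_i - b_i$ lies in $A$; define $\varphi_g \colon \{1, \dots, k\} \to A$ by $\varphi_g(i) = g b_i - b_i$. The key point is that the assignment $g \mapsto \varphi_g$ is injective: if $g b_i = b_i$ for all $i$, then because $g$ also acts trivially on $A$ and the $b_i$ together with $A$ generate $B$, the element $g$ fixes all of $B$, hence $g = 1$ by faithfulness. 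Therefore $\# G \le (\# A)^k$.

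To upgrade this inequality to a divisibility, I would check that the image of $g \mapsto \varphi_g$ is a subgroup of the abelian group $A^k = \operatorname{Map}(\{1,\dots,k\}, A)$. A direct computation using $A$-triviality gives the cocycle-type identity $\varphi_{gh}(i) = (gh)b_i - b_i = g(h b_i - b_i) + (g b_i - b_i) = (h b_i - b_i) + (g b_i - b_i) = \varphi_h(i) + \varphi_g(i)$, where the third equality uses that $h b_i - b_i \in A$ and $g$ acts trivially on $A$. Thus $g \mapsto \varphi_g$ is an injective group homomorphism $G \to A^k$, which forces $\# G$ to divide $\#(A^k) = (\# A)^k$, as claimed.

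One subtlety I would make sure to address is the choice of generators: the $b_i$ chosen as lifts of the $c_i$ need not generate $B$ on their own, but $A$ together with the $b_i$ does generate $B$, because their images generate $B/A = C$. This is exactly what is needed both for the injectivity argument and for the homomorphism property, so no further hypotheses are required. The main (and essentially only) obstacle is getting the direction of the homomorphism and the role of the trivial actions straight so that the additivity identity comes out correctly; once that bookkeeping is done, the divisibility is immediate from Lagrange's theorem applied to the finite abelian group $A^k$.
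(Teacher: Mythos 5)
Your proof is correct and complete. The paper itself gives no argument here (it states only ``We leave the proof to the reader''), and the argument you supply is exactly the standard one that is presumably intended: lifting generators of $C$ to $b_1,\dots,b_k\in B$, observing that $g\mapsto(gb_i-b_i)_i$ is a well-defined map into $A^k$ because $G$ acts trivially on $C$, that it is injective because $A$ together with the $b_i$ generates $B$ and the action on $B$ is faithful, and that triviality of the action on $A$ turns the usual cocycle identity into additivity, so that $G$ embeds as a subgroup of $A^k$ and Lagrange gives the divisibility. You have also correctly flagged and handled the one real subtlety, namely that the $b_i$ alone need not generate $B$ but $A\cup\{b_1,\dots,b_k\}$ does.
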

\begin{proof}
We leave the proof to the reader.
\end{proof}

\begin{lemma} \label{prop:etale_field}
Let $R$ be the field as before in Condition $(2)$.
If Condition $(2)$ holds for the category $\catc$, then any \'etale object $\gsc$ in $\catc$
becomes constant over $R$.
\end{lemma}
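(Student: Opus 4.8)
The plan is to prove the equivalent statement that $K(\gsc)\subseteq R$; this says precisely that $\gsc\times_{O_S}R$ is a constant group scheme, since an \'etale group scheme over the field $R$ is constant exactly when $G_R=\Gal(\Alg K/R)$ acts trivially on its $\Alg K$-points. Write $M=\gsc(\Alg K)$ with its action $\rho_\gsc\colon G_K\to\Aut(M)$. First I would show that $G_F$ acts on $M$ through an $\ell$-group. Choose a composition series $0=M_0\subsetneq M_1\subsetneq\cdots\subsetneq M_r=M$ of $M$ as a $G_K$-module and take Zariski closures in $\gsc$, as in the proof of Lemma \ref{lem:irrgenfiber}; this gives a filtration of $\gsc$ by closed flat subgroup schemes $\gsc_i$, each \'etale over $O_S$ (a closed immersion into an \'etale scheme is unramified, and finite flat plus unramified means \'etale) and each in $\catc$. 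The quotients $\gsc_i/\gsc_{i-1}$ are then \'etale objects of $\catc$ whose generic fibre $M_i/M_{i-1}$ is $G_K$-simple, hence simple objects of $\catc$, so $K(\gsc_i/\gsc_{i-1})\subseteq F$ by the definition of $F$ in Condition $(2)$. Thus $G_F$ stabilises the filtration $M_\bullet$ and acts trivially on each graded piece; since $M$ is a finite $\ell$-group, such a subgroup of $\Aut(M)$ is an $\ell$-group (its elements are unipotent, hence of $\ell$-power order). Hence $P:=\rho_\gsc(G_F)=\Gal(F\!\cdot\!K(\gsc)/F)$ is a finite $\ell$-group.

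Next I would record the ramification of $L:=F\!\cdot\!K(\gsc)$ over $F$. As $\gsc$ is \'etale over $O_S$, the extension $K(\gsc)/K$ is unramified outside $S$, and so is each $K(E)/K$ for $E$ a simple \'etale object of $\catc$; hence $F/K$ and therefore $L/F$ are unramified outside $S$. Since $L/F$ is an $\ell$-extension and no prime of $S$ divides $\ell$, every prime of $F$ above $S$ has residue characteristic prime to $\ell$, so the wild inertia there (a pro-$p$ group with $p\ne\ell$) is trivial and $L/F$ is at most tamely ramified at the primes over $S$. These two properties are inherited by every subextension of $L/F$, in particular by the maximal abelian subextension $E/F$, which is the fixed field of $[P,P]$ and is Galois over $F$ with group $P/[P,P]$. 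Maximality of $R$ in Condition $(2)$ gives $E\subseteq R$, so $P/[P,P]=\Gal(E/F)$ is a finite quotient of $\Gal(R/F)$; since $R/F$ is cyclic, $P/[P,P]$ is cyclic.

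Now Lemma \ref{lem:cyclicpgroup}, applied with $p=\ell$, shows that the $\ell$-group $P$ is itself cyclic, hence abelian; then $L/F$ is abelian, unramified outside $S$, and tamely ramified above $S$, so maximality of $R$ forces $L\subseteq R$ and therefore $K(\gsc)\subseteq R$, as wanted. The main obstacle is precisely this last step: Condition $(2)$ only controls abelian quotients of the relevant Galois group, and the passage from ``abelianization cyclic'' to ``the group itself cyclic'' is exactly what the Burnside-basis argument in Lemma \ref{lem:cyclicpgroup} supplies. The other point that needs care is ensuring that $\rho_\gsc(G_F)$ is an $\ell$-group, which is what makes both the tameness above $S$ and the applicability of Lemma \ref{lem:cyclicpgroup} available; this is why one must argue with a $G_K$-composition series of $M$ and use that each simple subquotient is not merely simple over $K$ but is a simple \'etale object of $\catc$, so that its splitting field lies in $F$.
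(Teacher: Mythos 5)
Your proof is correct and follows essentially the same route as the paper: show that $\Gal(F(\gsc_F)/F)$ is an $\ell$-group, observe that its abelianization cuts out an extension of $F$ unramified outside $S$ and tame above $S$, hence contained in the cyclic field $R$, and then invoke Lemma \ref{lem:cyclicpgroup} to conclude the whole group is cyclic and the whole field lies in $R$. The only cosmetic difference is that you establish the $\ell$-group claim via a $G_K$-composition series and a unipotence argument, whereas the paper runs an induction on the order of $\gsc$ using Lemma \ref{lem:mod_ext_gal}; these are the same argument in different clothing.
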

\begin{proof}
Let $\gsc$ be any \'etale group scheme in $\catc$.
We claim that $\Lambda=\Gal(F(\gsc_F)/F)$ is an $\ell$-group.
The proof proceeds by induction on the order of $\gsc$.
There exists an \'etale group scheme $\gsc'$ in $\catc$ such that
we have the following short exact sequence of group schemes over the field $F$:
$$
0 \longrightarrow \gsc'_F \longrightarrow \gsc_F \longrightarrow \Z/\ell\Z \longrightarrow 0 .
$$
By induction, $\Gal(F(\gsc'_F)/F)$ is an $\ell$-group.
Apply Lemma \ref{lem:mod_ext_gal} to finish the induction and prove the claim.

We note that $\Lambda/[\Lambda,\Lambda]$ is an abelian $\ell$-group and hence the fixed field of $[\Lambda,\Lambda]$
is at most tamely ramified at primes dividing the primes in $S$.
This fixed field is contained in $R$,
which by assumption is a cyclic extension of $F$.
Hence also $\Lambda/[\Lambda,\Lambda]$ is cyclic and by Lemma \ref{lem:cyclicpgroup} the group $\Lambda$ is cyclic.
We conclude that $F(\gsc_F)$ is contained in $R$, which is exactly what we wanted to prove.
\end{proof}

For example, if the 
Hilbert class field of $F$ is trivial and $S$ contains only one prime that does not split in $F/K$, 
then $R$ is a cyclic extension of $F$.

\begin{proposition} \label{prop:points}
Let $q \notin S$ be a prime in $O_{K}$
that is inert in $R/K$.
Suppose that Conditions $(1)$ and $(2)$ hold for the category $\catc$.
Then for any $\gsc$ in $\catc$ having $n$ simple \'etale group schemes 
and $m$ simple group schemes of multiplicative type in its filtration,
the following inequalities hold:
$$
| \gsc_q ( \F_q ) |  \geq \ell^{n} \quad \text{and} \quad | \gsc^*_q (\F_q) | \geq \ell^{m} .
$$
\end{proposition}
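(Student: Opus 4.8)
The plan is to reduce the statement about $\F_q$-points of $\gsc_q$ to a statement about the size of the Galois group $\Gal(R/K)$ acting on the fibers, exploiting that $q$ is inert in $R/K$. First I would use Corollary \ref{cor:filter1} (applied via Proposition \ref{prop:filter1}, which only needs Condition $(1)$) to split off the \'etale part: there is an exact sequence $0 \to \gsc_1 \to \gsc \to \gsc_2 \to 0$ with $\gsc_2$ \'etale admitting exactly the $n$ simple \'etale subquotients of $\gsc$, and $\gsc_1$ admitting none. Since taking $\F_q$-points is left exact, $|\gsc_q(\F_q)| \geq |(\gsc_2)_q(\F_q)|$, so it suffices to prove the bound for an \'etale group scheme all of whose simple subquotients are \'etale, i.e. for $\gsc_2$ with $n$ simple subquotients in its filtration.

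Next I would reduce to computing invariants under Frobenius. Because $\gsc_2$ is \'etale over $O_S$ and $q \notin S$, the group $\gsc_2(\F_q) = \gsc_2(\overline{\F_q})^{\mathrm{Frob}_q}$ equals the invariants of $\gsc_2(\Alg{K})$ under a Frobenius element at $q$, i.e. under the image of $\mathrm{Frob}_q$ in $\Gal(K(\gsc_2)/K)$. By Lemma \ref{prop:etale_field} (which uses Condition $(2)$), $K(\gsc_2) \subseteq R$, and since $q$ is inert in $R/K$, the Frobenius at $q$ generates $\Gal(R/K)$, hence generates $\Gal(K(\gsc_2)/K)$, which by Lemma \ref{prop:etale_field} (applied over $F$) and the cyclicity established there is a cyclic group. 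Call this cyclic group $C$ and let $\sigma$ be the generator coming from $\mathrm{Frob}_q$; then $\gsc_2(\F_q) = \gsc_2(\Alg{K})^\sigma$.

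The heart of the argument is then the combinatorial claim: if $C = \langle \sigma \rangle$ is cyclic and acts on a finite abelian $\ell$-group $M$ (namely $M = \gsc_2(\Alg{K})$) which admits a filtration of $\ell$-power-order pieces with $n$ simple \'etale subquotients, then $|M^\sigma| \geq \ell^n$. I would prove this by induction on $n$ along the filtration $0 \to M' \to M \to M'' \to 0$ with $M''$ of order $\ell$: since $C$ is cyclic, the long exact sequence in cohomology gives $|M^\sigma| \cdot |(M'')^\sigma| = |(M')^\sigma| \cdot |\widehat{H}^0|$-type relations, but more simply one uses that for a cyclic group $H^1(C, N)$ and $H^0(C,N)$ have the same order for finite $N$ (Herbrand quotient $1$), so the snake lemma on $0 \to M' \to M \to M'' \to 0$ yields $|M^\sigma| \geq |(M')^\sigma|\cdot|(M'')^\sigma| / |(M')_{\sigma}|$... rather than chase orders, the cleanest route is: $(M'')^\sigma = M''$ has order $\ell$ (trivial action on the \'etale quotient up in $R$? no — need care), and the connecting map $M'' = (M'')^\sigma \to H^1(C,M')$ together with $|H^1(C,M')| = |(M')^\sigma|/|N_\sigma(M')| \leq |(M')^\sigma|$ gives $|M^\sigma| = |(M')^\sigma|\cdot|\mathrm{image}(M \to M'')^\sigma| \geq |(M')^\sigma|\cdot \ell / |H^1(C,M')|$, and one must track the Herbrand quotient to see this is $\geq \ell \cdot |(M')^\sigma| / |(M')^\sigma| \cdot (\text{correction})$. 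The dual inequality $|\gsc^*_q(\F_q)| \geq \ell^m$ follows by applying the entire argument to the Cartier dual $\gsc^*$, whose filtration has $m$ simple \'etale subquotients (these being Cartier duals of the $m$ simple multiplicative-type subquotients of $\gsc$), and noting $q$ is still inert in $R/K$.

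The main obstacle is this last inductive order-counting step: one must be careful that the simple \'etale subquotients genuinely contribute a factor $\ell$ each to $|M^\sigma|$ even though $\sigma$ acts nontrivially, which is where the hypothesis that $q$ is \emph{inert} (so $\mathrm{Frob}_q$ generates the cyclic group, giving maximal cohomological control) is essential; a non-generator could kill the invariants. The Herbrand-quotient-one property of cyclic groups is exactly what rescues the inequality, turning each length-$\ell$ step of the filtration into a clean factor of $\ell$.
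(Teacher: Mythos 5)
Your opening move (splitting off the \'etale quotient via Proposition \ref{prop:filter1} and reducing to counting Frobenius invariants of $\gsc_2(\Alg{K})$) is the same as the paper's, but the argument breaks at exactly the step you flag as ``the main obstacle,'' and it cannot be closed as proposed. The combinatorial claim at the heart of your proof --- that if $\sigma$ generates a cyclic group acting on a finite abelian $\ell$-group $M$ with a filtration having $n$ simple subquotients, then $|M^\sigma|\geq \ell^n$ --- is false. Take $\ell=2$, $M=\F_4$ regarded as $\F_2^2$, and $\sigma$ of order $3$ acting by multiplication by a generator of $\F_4^\times$: here $n=1$ but $M^\sigma=0$. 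The Herbrand-quotient-one property only yields $|M^\sigma|=|M_\sigma|$ (kernel and cokernel of $\sigma-1$ on a finite group have equal order); it does not convert each simple step of the filtration into a factor of $\ell$ when $\sigma$ acts nontrivially on that step, and your inductive bookkeeping visibly does not terminate (the unresolved ``correction'' term is not a presentational issue but the actual failure). A smaller point: left exactness of taking $\F_q$-points bounds $|\gsc_q(\F_q)|$ below by the points of the \emph{sub}object, not the quotient $\gsc_2$; the inequality $|\gsc_q(\F_q)|\geq|(\gsc_2)_q(\F_q)|$ is true but already requires the $|H^0|=|H^1|$ argument you only invoke later.

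The paper's proof avoids this entirely: it does not fight a nontrivial Frobenius action at all. It uses Lemma \ref{prop:etale_field} to say the \'etale quotient $E$ becomes \emph{constant} over $R$, passes to a prime $\frakP$ of $O_R$ above $q$ where $E_\frakP$ is therefore constant with all $\geq\ell^n$ points rational, and then identifies the residue field at $\frakP$ with $\F_q$. In other words, the mechanism is that the Frobenius at $q$ acts \emph{trivially} on $E(\Alg{K})$ --- this is what the assertion $\F_\frakP=\F_q$ amounts to, and it holds automatically in the paper's applications, where $R=K$ --- not that it generates a full cyclic Galois group. By committing to the literal reading of ``inert'' (Frobenius a generator of $\Gal(R/K)$) and then trying to extract $\ell^n$ invariants by cohomology alone, you are attempting to prove a statement that is false for a general cyclic action; the missing ingredient is the constancy of $E$ over $R$ and the triviality of the relevant Frobenius on $E(\Alg{K})$, which is what actually delivers the $\ell^n$ points.
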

\begin{proof}
Let $R$ as before.
By Proposition \ref{prop:etale_field}
all \'etale objects in $\catc$ become constant over $R$.
Let $E$ be the \'etale quotient of $\gsc$ as in Proposition \ref{prop:filter1}.
Let $\frakP$ be a prime in $O_R$ lying above $q$.
The residue field $\F_\frakP$ is equal to $\F_q$.
Since $E_R$ is constant,
it follows that also $E_{\frakP}$ is constant and hence that $\gsc_R$ has at
least $\ell^{n}$ points in the fiber at ${\frakP}$.
The inequality $| \gsc_q ( \F_q ) |  \geq \ell^n$ follows.
The second inequality follows by Cartier duality.
\end{proof}

We are now able to prove Theorem \ref{thm:abvar_torsionfilter}:

\begin{proof}
By contradiction, suppose that $A[\ell]$ contains $k$ simple \'etale subquotients.
Then for any prime $q$ that is not in $S$ and is inert in $R/K$,
Proposition \ref{prop:points} says that the number of $\ell$-torsion
points of $A$ in the fiber at $q$ is at least $\ell^{k}$.
Hence $A[\ell^n]$ has at least $\ell^{kn}$ points in the fiber at $q$.
This is in contradiction with the fact that $A(\F_q)$ is finite for $n$ sufficiently large.
Let $A^{\text{dual}}$ be the dual abelian variety of $A$.
For each $n$, the group scheme $A^{\text{dual}}[\ell^n]$ is the Cartier dual of $A[\ell^n]$.
If $A[\ell]$ has subquotients of multiplicative type,
then $A^{\text{dual}}[\ell]$ has \'etale subquotients which is impossible by the same argument
given above but now applied to the abelian variety $A^{\text{dual}}$.
\end{proof}

We apply Theorem \ref{thm:abvar_torsionfilter} together with the three steps described in the introduction
to prove:

\begin{theorem}
There are no non-zero abelian varieties over $\Q(\sqrt{13})$ with good reduction everywhere.
\end{theorem}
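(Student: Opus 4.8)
The plan is to apply Theorem \ref{thm:abvar_torsionfilter} by choosing a suitable category $\catc$ containing all $\calA[\ell^n]$ and verifying Conditions $(1)$ and $(2)$, so that a hypothetical abelian variety $A/\Q(\sqrt{13})$ with good reduction everywhere would have $\calA[\ell]$ free of \'etale and multiplicative-type subquotients. We then derive a contradiction from the structure theory of group schemes, since over a number field a non-zero abelian variety with good reduction everywhere produces non-trivial torsion and, at least for some prime $\ell$, an \'etale or multiplicative subquotient must appear. Concretely, here $K = \Q(\sqrt{13})$ and $S = \varnothing$, and one works with a small prime $\ell$ (the natural choice is $\ell = 2$, since $2$ ramifies in $\Q(\sqrt{13})$ only mildly and the resulting field extensions are small).

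The key steps, in order: first, define $\catc$ to be the category of finite flat commutative group schemes over $O_K = O_S$ of $\ell$-power order whose generic fibers have all their Jordan--H\"older factors among an explicitly determined finite list; by good reduction everywhere, each $\calA[\ell^n]$ lies in $\catc$. Second, classify the simple objects of $\catc$: by Lemmas \ref{lem:irrgenfiber} and \ref{lem:factor_through_p_group}, the generic fiber of a simple object is an irreducible $\F_\ell[\Gal(\maxextcatc/K)]$-module on which no normal $\ell$-subgroup acts, and using the Odlyzko discriminant bounds together with the very restricted ramification (unramified outside $\ell$, tame conditions at $\ell$) one shows $\maxextcatc$ is a small explicit field; the Oort--Tate and Raynaud classifications then pin down the simple group schemes — one expects only $\Z/\ell\Z$, $\mu_\ell$, and possibly one or two further simple objects. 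Third, compute the field $F$ of Condition $(2)$ (the compositum of the $K(E)$ over simple \'etale $E$) and check that its maximal abelian extension unramified outside $S$ and tame at $S$ is cyclic: with $S = \varnothing$ and a trivial or cyclic class group this reduces to a class-field-theory computation over $F$. Fourth, compute $\Ext^1_\catc(T, E)$ for $T$ simple non-\'etale and $E$ simple \'etale and verify it vanishes (Condition $(1)$); this is typically done via fppf cohomology or flat Ext computations bounding extension classes by the allowed ramification. Finally, with both conditions verified, Theorem \ref{thm:abvar_torsionfilter} applies to $\calA[\ell]$, and since $\calA[\ell]$ over a complete local ring at a prime above $\ell$ must have a non-trivial connected-\'etale sequence (forcing either an \'etale or a multiplicative subquotient once $A \neq 0$), we reach a contradiction, so $A = 0$.

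The main obstacle is Step 3 together with Step 4: verifying that the maximal ramified-only-at-$S$, tame-at-$S$ abelian extension $R/F$ is cyclic requires precise control of the class group and ramification of the field $F$, which depends delicately on the outcome of the classification in Step 2; and the $\Ext^1$ computation in Step 4 is the technical heart, since one must rule out \emph{all} non-trivial extensions of a non-\'etale simple by an \'etale simple inside $\catc$, which amounts to a careful local analysis at the primes above $\ell$ (here $2$), where wild ramification makes the Oort--Tate/Raynaud bookkeeping subtle. Once these explicit computations are in hand, the deduction from Theorem \ref{thm:abvar_torsionfilter} is immediate.
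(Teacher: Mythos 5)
Your overall strategy matches the paper's: take $\ell = 2$, define a category $\catc$ over $O = \Z[\frac{1+\sqrt{13}}{2}]$, classify its simple objects via the Fontaine/Odlyzko discriminant bounds and Oort--Tate, verify Conditions $(1)$ and $(2)$, and invoke Theorem \ref{thm:abvar_torsionfilter}. The genuine gap is in your final step, where the contradiction is actually extracted. You claim that for $A \neq 0$ the connected-\'etale sequence of $\calA[2]$ at a prime above $2$ forces an \'etale or multiplicative subquotient. This is false in general: if the reduction of $A$ at the prime above $2$ has no physical $2$-torsion (the ``supersingular'' situation), then $\calA[2]$ is local-local --- connected with connected Cartier dual --- and admits neither an \'etale nor a multiplicative subquotient. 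Moreover, the subquotients in Theorem \ref{thm:abvar_torsionfilter} are subquotients of the filtration over $O_S$, so a local connected-\'etale decomposition at one place would not directly produce them anyway. The contradiction in the paper comes instead from the classification itself: the root discriminant bound forces $\maxextcatc = \Q(i,\sqrt{\eta})$ with $\eta = \frac{3+\sqrt{13}}{2}$, and since $2$ is \emph{inert} in $\Q(\sqrt{13})$ (it does not ramify, contrary to your remark), the Oort--Tate corollary shows that the only simple objects of $\catc$ are $\mu_2$ and $\Z/2\Z$ --- no local-local simple group schemes exist over this ring. Hence any non-zero $\calA[2]$ is necessarily filtered by \'etale and multiplicative simple objects, which Theorem \ref{thm:abvar_torsionfilter} forbids; therefore $\calA[2]=0$ and $A=0$. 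Without that classification step doing the work, your argument does not close.

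A secondary issue: the paper takes $\catc$ to be \emph{all} finite flat commutative group schemes of $2$-power order over $O$, which trivially contains every $\calA[2^n]$ and is closed under products, subquotients and Cartier duality. Your proposed restriction to group schemes whose generic fibers have Jordan--H\"older factors in a prescribed finite list is both unnecessary and mildly circular, since verifying $\calA[2^n] \in \catc$ would then already require the classification you intend to carry out afterwards.
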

\begin{proof}
We follow the steps mentioned in the introduction: 

\begin{enumerate}
\item
We define $\catc$ to be the category of finite flat commutative group schemes of $2$-power order
over $O=\Z[\frac{1+\sqrt{13}}{2}]$.

\item 
By \cite{Fontaine:1985} we know that the root discriminant $\delta$ of the extension $\maxextcatc/\Q$
satisfies $\delta < 4\sqrt{13}$.
By Odlyzko's tables this implies that $[\maxextcatc:\Q] < 60$.
Group schemes in $\catc$ annihilated by $2$ are isomorphic to $\mu_2, \Z/2\Z$
and the non-trivial extensions of $\Z/2\Z$ by $\mu_2$ described
in \cite[Section 8.7, p.251]{KatzMazur:1985} using the units $-1$ and $\eta = \frac{3+\sqrt{13}}{2}$.
Hence $\maxextcatc$ contains $i$ and the square root of $\eta$:
$$
\Q(\sqrt{13}) \subset_{4} \Q(i, \sqrt{\eta}) \subset_{\leq 7} \maxextcatc .
$$
The extension $\maxextcatc/\Q(i, \sqrt{\eta})$ is unramified outside $2$
and is solvable. However, the smallest non-trivial abelian extension unramified outside $2$
of $\Q(i, \sqrt{\eta})$ is a subfield of the ray class field of conductor $\bad_2^6$,
where $\bad_2$ is the unique prime above $2$ in $\Q(i, \sqrt{\eta})$.
This subfield violates the root discriminant bound on $\maxextcatc$.
It follows that $\maxextcatc= \Q(i, \sqrt{\eta})$.
By Lemma \ref{lem:factor_through_p_group} this implies that every simple object in $\catc$ has rank $2$.

Since $2$ is inert in $\Q(\sqrt{13})$, this implies by \cite[Corollary, p.21]{TateOort:1970} that the simple group schemes in $\catc$
are $\mu_{2}$ and $\Z / 2\Z$.
One now checks that Condition $(2)$ is satisfied.

\item
For this category, 
Now we use Theorem \cite[Prop. 2.6]{Schoof:2003} to verify that Condition $(1)$ is satisfied.

The $2$-torsion of a non-zero abelian variety over $\Q(\sqrt{13})$ with good reduction everywhere is an object in $\catc$,
and this $2$-torsion subgroup scheme must be filtered by copies of $\mu_2$ or $\Z/2\Z$.
This, however, contradicts Theorem \ref{thm:abvar_torsionfilter}.

\end{enumerate}
\end{proof}

As another example, we show that:

\begin{theorem}
There are no non-zero abelian varieties over $\Q(\sqrt{17})$ with good reduction everywhere.
\end{theorem}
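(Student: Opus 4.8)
The plan is to mimic, step by step, the structure of the proof just given for $\Q(\sqrt{13})$, adjusting the number-theoretic input to the field $K=\Q(\sqrt{17})$ with ring of $S$-integers $O = \Z[\frac{1+\sqrt{17}}{2}]$ (here $S=\emptyset$, so we work over the full ring of integers). We take $\catc$ to be the category of finite flat commutative group schemes of $2$-power order over $O$; it is visibly closed under products, subquotients and Cartier duality, and the N\'eron model torsion $\calA[2^n]$ of any abelian variety over $K$ with good reduction everywhere lies in $\catc$. The goal is to identify the simple objects, verify Conditions $(1)$ and $(2)$, and then invoke Theorem~\ref{thm:abvar_torsionfilter} to conclude that $\calA[2]$ admits only \'etale and multiplicative subquotients — but every object of $\catc$ killed by $2$ is filtered by such simples, so $\calA[2]$ would be filtered entirely by \'etale and multiplicative pieces, contradicting the theorem unless $A=0$.

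First I would bound the maximal $2$-torsion extension $\maxextcatc$. By Fontaine's estimate the root discriminant of $\maxextcatc/\Q$ is $<4\sqrt{17}\approx 16.49$, and Odlyzko's unconditional discriminant bounds then force $[\maxextcatc:\Q]$ to be small. Next I would pin down the group schemes in $\catc$ annihilated by $2$: over $O$ these are $\mu_2$, $\Z/2\Z$, and the nontrivial extensions of $\Z/2\Z$ by $\mu_2$ classified as in \cite[Section~8.7]{KatzMazur:1985}, parametrised by a pair of units of $O$ (here $-1$ together with a fundamental unit $\varepsilon = 4+\sqrt{17}$, up to squares). Adjoining the relevant square roots shows $\maxextcatc$ contains $\Q(i,\sqrt{\varepsilon})$ over $K$, and then — exactly as in the $\sqrt{13}$ case — I would argue that any proper abelian extension of $\Q(i,\sqrt{\varepsilon})$ unramified outside $2$ and tamely ramified over the prime above $2$ would violate the root discriminant bound, using a ray class field computation (the relevant conductor exponent over the prime $\bad_2$ above $2$). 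This gives $\maxextcatc = \Q(i,\sqrt{\varepsilon})$, and by Lemma~\ref{lem:factor_through_p_group} every simple object of $\catc$ has rank $2$; since $2$ is inert in $\Q(\sqrt{17})$, the Oort--Tate classification \cite[Corollary, p.~21]{TateOort:1970} forces the simple objects to be exactly $\mu_2$ and $\Z/2\Z$. Condition~$(2)$ then needs $F$ (the compositum of the splitting fields $K(E)$ of simple \'etale schemes, i.e.\ $F=K$ since the only simple \'etale object $\Z/2\Z$ is already constant) to have its maximal abelian extension unramified outside $2$ and tame over $2$ be cyclic — a finite, checkable class-field-theory statement over $\Q(\sqrt{17})$.

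With the simple objects and Condition~$(2)$ in hand, the remaining point is Condition~$(1)$: the vanishing of $\Ext^1_\catc(T,E)$ for $T$ simple non-\'etale and $E$ simple \'etale, i.e.\ $\Ext^1_\catc(\mu_2,\Z/2\Z) = 0$ over $O=\Z[\frac{1+\sqrt{17}}{2}]$. This is precisely the kind of computation carried out in \cite[Prop.~2.6]{Schoof:2003}, which expresses such $\Ext$-groups in terms of $S$-unit groups and class groups of the base; I would apply that result directly, checking that the arithmetic of $\Q(\sqrt{17})$ (trivial class group, unit group, and the behaviour at $2$) makes the relevant group vanish. Once both conditions are verified, Theorem~\ref{thm:abvar_torsionfilter} applies and finishes the proof.

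The main obstacle I anticipate is Step~$2$: controlling $\maxextcatc$ tightly enough. The Odlyzko bound only gives an inequality on the root discriminant, and turning "$[\maxextcatc:\Q]$ is bounded" into the exact identity $\maxextcatc=\Q(i,\sqrt{\varepsilon})$ requires ruling out every candidate solvable extension of $\Q(i,\sqrt\varepsilon)$ unramified outside $2$ — a ray class field computation whose conductor bound must be matched precisely against the root discriminant estimate. The analogous step for $\Q(\sqrt{13})$ used the prime power $\bad_2^6$; for $\Q(\sqrt{17})$ the corresponding conductor exponent and the resulting discriminant comparison must be recomputed, and this is where the argument could be delicate (the fundamental unit and the splitting of $2$ differ). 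Everything else — closure properties of $\catc$, the shape of the $2$-torsion group schemes, and the $\Ext$-vanishing — is routine given the cited results.
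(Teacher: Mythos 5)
Your plan breaks down at the decisive arithmetic input: you assert that $2$ is inert in $\Q(\sqrt{17})$ and conclude that the only simple objects of $\catc$ are $\mu_2$ and $\Z/2\Z$, after which you expect Condition $(1)$ to hold and Theorem \ref{thm:abvar_torsionfilter} to apply. But $17 \equiv 1 \pmod 8$, so $2$ \emph{splits} in $O = \Z[\frac{1+\sqrt{17}}{2}]$, say $2 = \pi\pibar$. This changes everything. The Oort--Tate classification over $O$ now produces, in addition to $\mu_2$ and $\Z/2\Z$, two further simple group schemes $G_\pi$ and $G_\pibar$ of order $2$ attached to the factors of $2$. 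More importantly, Condition $(1)$ genuinely \emph{fails} for this category: $\Ext^1_{O}(\mu_2,\Z/2\Z)$ is non-trivial, a non-split extension being furnished by $G_\pi \times G_\pibar$, whose generic fiber is an extension of $\Z/2\Z$ by $\mu_2$ that does not split over $O$. So the hypothesis of Theorem \ref{thm:abvar_torsionfilter} is simply not satisfied, and no amount of ray class field or $S$-unit computation in the style of \cite[Prop.~2.6]{Schoof:2003} will rescue the Ext-vanishing; the obstruction is the splitting of $2$ itself. Your anticipated ``main obstacle'' (pinning down $\maxextcatc$) is not where the argument dies.

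The correct route, which the paper takes, is to accept the failure of Condition $(1)$ and exploit a weaker structural fact: every extension of a simple non-\'etale object by a simple \'etale object in $\catc$ is still annihilated by $2$, because all such extensions are built from $G_\pi$'s and $G_\pibar$'s. From this one deduces that for an abelian variety $A$ over $\Q(\sqrt{17})$ with good reduction everywhere, the rank of $A[2^n]$ (i.e.\ the number of simple constituents in a filtration) cannot grow with $n$, which is incompatible with $\#A[2^n] = 2^{2gn}$ for $g = \dim A > 0$. You would need to replace your third step entirely by an argument of this kind; as written, your proof proposal proves nothing for $\Q(\sqrt{17})$.
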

\begin{proof}
We follow the steps mentioned in the introduction: 
\begin{enumerate}
\item

Let $\catc$ be the category of finite flat commutative group schemes of $2$-power order
over $O=\Z[\frac{1 + \sqrt{17}}{2}]$.
We will see that the category $\catc$ does not satisfy Condition $(1)$ of Theorem \ref{thm:abvar_torsionfilter}.

\item
We find the maximal $2$-torsion extension $\maxextcatc/\Q(\sqrt{17})$ of $\catc$.
We leave it as an exercise to show that the extension $\maxextcatc/\Q(\sqrt{17})$ is finite and has degree a power of $2$.
So we can apply Lemma \ref{lem:factor_through_p_group}.
By factoring $2=\pi \bar{\pi}$ in $O$ we find the following simple group schemes:
$\mu_{2}, \Z / 2\Z , G_{\pi}$ and $G_{\bar{\pi}}$, where we refer to 
\cite{TateOort:1970} for the meaning of $G_{\pi}$ and $G_{\bar{\pi}}$.

\item
The only simple \'etale group scheme is $\Z/2\Z$ and we immediately verify 
Condition $(2)$ for our category $\catc$.
However, Condition $(1)$ fails because $\Ext^1_{O}(\mu_2,\Z/2\Z)$ is non-trivial
due to the splitting of the prime $2$ in $\Q(\sqrt{17})/\Q$:
A non-trivial extension is given by $G_\pi \times G_\pibar$.

Even though Condition $(1)$ does not hold, it is true that all extensions of simple
non-\'etale group schemes by simple \'etale group schemes are annihilated by $2$:
they are products of $G_\pi$'s and $G_{\bar{\pi}}$'s.
Using this, for any abelian variety $A$ over $\Q(\sqrt{17})$ with good reduction everywhere
one deduces that the rank of $A[2^n]$ (which is an object in $\catc$) cannot depend on $n$. Hence there
are no such non-zero abelian varieties.
\end{enumerate}
\end{proof}

We end this article by asking for which square-free integers $D$ do there exist
abelian varieties over $\Q(\sqrt{D})$ with good reduction everywhere?

\bibliography{index}
\bibliographystyle{alpha}
\end{document}